\newcommand{\domain}{\mathcal{D}}
\newcommand{\set}[1]{\left\{#1\right\}}
\newcommand{\norm}[2]{\| #1 \|_{#2}}
\newcommand{\lr}[1]{\left( #1\right)}
\newcommand{\yd}{y^\delta}
\newcommand{\xp}{x^\dag}
\newcommand{\xad}{x_{\alpha}^\delta}
\newcommand{\xa}{x_{\alpha}}
\newcommand{\xaux}{x_{\mathrm{aux}}}
\newcommand{\aux}{\alpha_{\mathrm{aux}}}
\newcommand{\ra}{r_{\alpha}}
\newcommand{\adp}{\alpha_{\mathrm{DP}}}
\newcommand{\mpe}{M_{p,E}}
\newcommand{\Jaa}{T_{-a,\alpha}}
\newcommand{\bigo}{\mathcal O}
\newtheorem*{thm}{Theorem}
\newtheorem{corollary}{Corollary}
\newtheorem{lemma}{Lemma}
\newtheorem{pro}{Proposition}
\newtheorem{ass}{Assumption}
\newtheorem*{DP}{Discrepancy principle}
\theoremstyle{definition}
\newtheorem*{definition}{Definition}
\theoremstyle{remark}
\newtheorem{remark}{Remark}
\newtheorem{xmpl}{Example}
\title[Oversmoothing for non-linear
problems]{Tikhonov regularization with oversmoothing penalty for non-linear ill-posed
  problems in Hilbert scales}
\author{Bernd Hofmann}
\address{Department of Mathematics, Chemnitz University of Technology,
  09107 Chemnitz,  Germany}
\email{bernd.hofmann@mathematik.tu-chemnitz.de}
\author{Peter Math\'e}
\address{Weierstra{\ss} Institute for Applied Analysis and
  Stochastics, Mohrenstra{\ss}e 39, 10117 Berlin,  Germany}
\email{peter.mathe@wias-berlin.de}
\date{}
\begin{document}
\begin{abstract}
  We study Tikhonov regularization for ill-posed non-linear operator equations
  in Hilbert scales. Our focus is on the interplay between the
  smoothness-promoting properties of the  penal\-ty and the smoothness
  inherent in the solution. The objective is to study the situation
  when the unknown solution fails to have a
  finite penalty value, hence when the penalty is oversmoothing.
By now this case was only studied for linear operator equations in
Hilbert scales. We extend those results to certain classes of non-linear problems.
The main result asserts that, under appropriate assumptions, order optimal reconstruction
is still possible. In an appendix we highlight that the non-linearity
assumption underlying the present analysis is met for specific applications.
\end{abstract}
\maketitle

\section{Introduction}
\label{sec:intro}
Tikhonov regularization is a versatile means of stabilizing linear and
non-linear ill-posed operator equations in Hilbert and Banach
spaces. In either case a stable approximate solution is obtained by
minimizing the Tikhonov functional, which consists of two
summands: a term representing the data misfit and a stabilizing
penalty. There is vast literature on the quality of the obtained approximate
solutions, and we mention the monographs~\cite{EHN96,Scherzetal09,SKHK12,Yagola98}. The analysis
always uses the minimizing property, i.e.,\ that the Tikhonov
functional at the minimizer takes smaller values than at the true
solution.

The choice of the penalty is context-dependent. This has consequences for the obtained
approximations, because the corresponding mimimizers will have a
finite penalty value. Some of the relevant choices are
\emph{total-variation} penalties, often used in image reconstruction (cf., e.g., \cite{Burger13}),
since such penalties promote edges, and $\ell^{1}$-penalties (cf., e.g., \cite{Daub04}) and \cite{BurFleHof13,GHS08,Lorenz08}) or  $L^1$-penalties (cf., e.g., \cite{BrePik13,ClaKun12} and \cite[p.~87-89]{Scherzetal09}),
since they have sparsity promoting properties. 

Actually, a smoothing penalty may be used in order to detect whether a
solution has certain smoothness properties. This was studied in the
context of numerical differentiation in~\cite{Wang02} (see also~\cite{Wan06} for extensions). These authors show that, for a
prescribed choice of the regularization parameter,  the penalty must
blow up whenever the solution fails to have a finite penalty value,
and this can be used to detect singularities.
% A specific situation in the context of the interplay of Tikhonov regularization with $H^1$-penalties and conditional stability for numerical differentiation has been presented in \cite{Wang02} (see also \cite{Wan06} for extensions). Here, a blow-up of the norms of regularized solutions to infinity indicates discontinuities in the derivative functions. This phenomenon is based on the fact that the penalty is too smooth compared to
% the really occurring solution.    

Within the present study the objective is to handle penalties which impose
smooth reconstructions. One reason to do so is the well-known fact
that the classical version of Tikhonov regularization has limited
qualification, which means that it cannot take into account higher solution smoothness.
Hence its error norm convergence rate as a function of the noise
level $\delta$  cannot exceed $\delta^{2/3}$ or~$\delta^{1/2}$,
depending on whether a priori or a posteriori parameter choice is
used.

In all the above cases we face the problem that the unknown solution
may not have the expected feature implied by the penalty. Images may not have only a few sharp
edges, solutions may not have a sparse representation or may not be
smooth. Then the question arises whether the use of such mis-specified
penalties still allows for good reconstructions yielding errors of optimal order with respect to the noise level.
 Here we focus on \emph{smoothness-promoting} penalties, and we shall
 call the penalty \emph{oversmoothing} if the unknown solution element fails
 to have the expected smoothness, and hence the penalty will not have
 a finite value at the solution.

For {\it linear ill-posed problems in Hilbert scales} F.~Natterer has
shown  in the seminal paper~\cite{Natterer84} that oversmoothing makes
no trouble and corresponding results  on convergence rates have been
established. The analysis of linear ill-posed problems
in Hilbert scales is special, because the minimizer of the Tikhonov
functional is given explicitly and the analysis is based on the
explicit representation.

Here we extend those results to some classes of non-linear
operator equations, which have only solutions with \emph{infinite penalty value}, and hence we cannot apply the
arguments, typically used in the proofs. Our `proof architecture' is
also different from that of \cite{Natterer84}, even in the special
case of a linear forward operator, because it uses the discrepancy
principle for the choice of the regularization parameter.
The present approach may be regarded as  a further step towards regularization theory
for linear or non-linear operator equations with oversmoothing penalty.

Specifically, we consider the approximate solution of an (at least
locally) ill-posed operator equation
\begin{equation}
  \label{eq:opeq}
  F(x) =y
\end{equation}
with an (in general) non-linear forward operator $F: \domain(F) \subseteq X \to Y$ between the infinite dimensional Hilbert
 spaces $X$ and $Y$,
 with domain of definition~$\domain(F)$.  By $\xp$ we denote a solution to \eqref{eq:opeq} for given $y$.

In order to consider a Hilbert scale, and to treat an oversmoothing
penalty,  we introduce an {\it unbounded linear self-adjoint operator} $B\colon \domain(B)
\subset X \to X$, which is strictly positive such that we have for some $m>0$
\begin{equation} \label{eq:m}
\norm{Bx}{X}\geq m \norm{x}{X},\quad \mbox{for all} \quad x\in \domain(B).
\end{equation}

Based on noisy data $\yd \in Y$,  obeying the deterministic noise model
\begin{equation} \label{eq:noise}
\|y-\yd\|_Y \le \delta
\end{equation}
with {\it noise level} $\delta>0$, we use regularized solutions $\xad \in \domain$ for
$$ \domain:=\domain(F) \cap \domain(B),$$
being minimizers of the extremal problem
\begin{equation}
  \label{eq:tikhonov}
  T^\delta_{\alpha}(x) := \norm{F(x) - \yd}{Y}^{2} + \alpha\norm{B(x - \bar x)}{X}^{2} \to \min, \;\; \mbox{subject to} \;\;  x\in\domain,
\end{equation}
for the Tikhonov functional $T^\delta_\alpha$, where $\bar
x\in\domain$ is a given smooth reference element.

We shall focus on the oversmoothing case, i.e.,\ when~$\xp\not\in\domain(B)$ such that~$T^\delta_{\alpha}(\xp)=
\infty$.

As already mentioned, in the linear case, i.e.~for a bounded linear operator $F$,
such approach was studied by Natterer in~\cite{Natterer84}.
In a slightly
different setup oversmoothing was studied in~\cite{MathTaut11}. Again,
optimality was shown for a priori parameter choice and for several a
posteriori choices of the regularization parameter~$\alpha$.

The outline is as follows. We formulate our assumptions and the main
result in Section~\ref{sec:main}. The proof of this result is then
given in Section~\ref{sec:proof-main}. Relevant outcomes
of this study are discussed in Section~\ref{sec:discussion}. The Appendix provides us with
several assertions and examples in order
to validate the required non-linearity assumption in specific situations.

\section{Assumptions and main result}
\label{sec:main}

We shall use the following standing assumption in order to guarantee
the required properties of $F,\;\domain(F)$ and $\xp$.

\begin{ass}[Forward operator] \label{ass:basic} The operator $F$ is
  weakly sequentially continuous, and its domain $\domain(F)$ is
  convex and closed. Moreover, the solution $\xp \in \domain(F)$ is
  supposed to be an interior point of the domain $\domain(F)$.
\end{ass}

From \eqref{eq:m} we have that the convex penalty functional
$\norm{B(x - \bar x)}{X}^{2}$ is stabilizing. Then, taking into
account Assumption~\ref{ass:basic} we have that the Assumptions~3.11
and 3.22 in \cite{SKHK12} are all fulfilled, and the usual assertions
on existence and stability of regularized solutions
(cf.~\cite[Section~4.1.1]{SKHK12}) apply.

The unbounded self-adjoint operator~$B$ satisfying \eqref{eq:m} generates a {\it Hilbert scale} $\{X_\tau\}_{\tau \in
  \mathbb{R}}$ with $X_0=X$, $X_\tau=\domain(B^\tau)$,  and with
corresponding  norms $\|x\|_\tau:=\|B^\tau x\|_X$.

We make the following additional assumption on the structure of
non-linearity for the forward operator $F$ with respect to the Hilbert
scale generated by the operator $B$. Sufficient conditions and
examples for this non-linearity assumption will be given in the
appendix.
\begin{ass}
  [Non-linearity structure]\label{ass:nonlinearity} There is a
  number~$a>0$, and there are constants~$0< c_{a}< C_{a}<\infty$ such
  that
  \begin{equation} \label{eq:twosides} c_{a}\norm{x - \xp}{-a} \leq
    \norm{F(x) - F(\xp)}{Y} \leq C_{a}\norm{x - \xp}{-a}\; \mbox{for
      all} \; x\in \mathcal{D}.
  \end{equation}
\end{ass}

At this point we recall the concept of local well-posedness and
ill-posedness from \cite[Definition~2]{HofSch94}. A similar concept is also mentioned in \cite{LorWor13}.

\begin{definition}[local well-posedness and
  ill-posedness] \label{def:localill} The operator equation
  \eqref{eq:opeq} with forward operator $F:\domain(F) \subset X \to Y$
  is called locally well-posed at the point $\xp \in \domain(F)$ if
  there is a ball $\mathcal{B}_r(\xp)$ with center $\xp$ and radius
  $r>0$ such that, for every sequence
  $\{x_n\}_{n \in \mathbb{N}} \subset \mathcal{B}_r(\xp)\cap
  \domain(F)$, the implication
$$ \lim \limits_{n \to \infty}\|F(x_n)-F(\xp)\|_Y = 0\quad  \Longrightarrow  \quad  \lim \limits_{n \to \infty}\|x_n-\xp\|_X = 0$$
is valid, otherwise \eqref{eq:opeq} is called locally ill-posed at the
point $\xp$.
\end{definition}

If $F=A \in \mathcal{L}(X,Y)$ is a bounded linear operator, then we
note that \eqref{eq:opeq} is locally well-posed everywhere if $F$ is
injective and the range $\mathcal{R}(A)$ is a closed subset of $Y$,
otherwise the equation is locally ill-posed everywhere. For linear
$F=A$ the left inequality of \eqref{eq:twosides} yields
injectivity. For non-linear $F$ this left inequality of
\eqref{eq:twosides}, which was also exploited in the paper
\cite{Taute94}, yields local injectivity at $\xp$ and if the lower
index $a$ were zero, then we would have local well-posedness of
\eqref{eq:opeq} from \eqref{eq:twosides}. But for $a>0$, i.e.,\ when
the norm in $X_a$ is weaker than the norm in $X$, local ill-posedness
can occur and motivates the use of regularization for the stable
approximate solution of \eqref{eq:opeq}.

We turn to the notion of solution smoothness, and we shall measure
this  with respect to the operator~$B$.
\begin{ass}
  [Solution smoothness]\label{ass:smooth} There are $0<p<1$ and
  $E <\infty$ such that~$\xp\in \domain(B^{p})$ and
  \begin{equation}
    \label{eq:mpe}
    \xp - \bar x \in \mpe:= \set{x\in X_{p},\quad \norm{x}{p}:= \norm{B^{p}x}{X} \leq E}
  \end{equation}
  for the smooth reference element $\bar x\in\domain$ from the
  Tikhonov functional $T^\delta_\alpha$. However, we assume that
  $\xp \notin \domain(B)$.
\end{ass}
The focus in this study is to discuss \emph{oversmoothing} with
$\xp - \bar x \in \mpe$ for some~$0< p <1$, but $\xp \notin \domain(B)$, albeit
the Tikhonov penalty uses the operator~$B$, such that all minimizers
of~(\ref{eq:tikhonov}) are smoother than the solution element~$\xp$.

Throughout this paper, we choose the regularization parameter a
posteriori as $\adp=\adp(\delta,\yd)$ according to a specific version
of the {\it discrepancy principle}, which is introduced as follows:

\begin{DP}
  For a prescribed constant $C> 1$ chose the regularization
  $\alpha=\alpha_{DP}$ according to
  \begin{equation}
    \label{eq:dp1}
    \norm{F(\xad) - \yd}{Y} = C \delta .
  \end{equation}
\end{DP}

In the sequel we assume that $\adp$ exists for the given data
$\yd \in Y$, at least in the case of sufficiently small $\delta>0$. In
our context (cf.~\eqref{eq:tikhonov}), the condition
$$\|F(\bar x)-\yd\|_Y > C\delta$$
is sufficient for this existence in the special case of linear forward
operators $F$. Due to possibly occurring gaps the existence of
$\adp$ can fail for non-linear $F$, and additional conditions
(cf.~\cite[Condition~3.9]{AnzRam10}) are required for ensuring the
existence of $\adp$.

In the subsequent analysis we suppose that the chosen
parameter~$\alpha=\alpha(\delta,\yd)$ taken from the discrepancy
principle as $\alpha=\adp$ tends to zero as $\delta \to 0$. As the
following lemma shows, this is no additional requirement, but it is
ensured under the above assumptions.

\begin{lemma} \label{lem:onlycase} Under the assumptions stated above
  the regularization parameters $\adp = \adp(\delta,\yd)$, depending
  on $\delta$ and $\yd$, chosen according to the discrepancy principle,
  tend to zero when $\delta \to 0$ and $\yd$ obeys the noise model
  \eqref{eq:noise}.
\end{lemma}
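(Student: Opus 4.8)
The plan is to argue by contradiction, exploiting the minimizing property of the Tikhonov functional together with the weak-topology hypotheses of Assumption~\ref{ass:basic} and the lower bound of Assumption~\ref{ass:nonlinearity}. Suppose that $\adp$ does not tend to zero. Then there exist a constant $\alpha_0>0$ and a sequence $\delta_n\to 0$, with data $y^{\delta_n}$ obeying \eqref{eq:noise}, such that the parameters $\alpha_n:=\adp(\delta_n,y^{\delta_n})$ satisfy $\alpha_n\ge\alpha_0$ for all $n$. I write $x_n$ for the corresponding minimizer of \eqref{eq:tikhonov} at $\alpha=\alpha_n$, and I first extract uniform smoothness of the family $\{x_n\}$.

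The key estimate comes from comparing the minimizer with the admissible reference element $\bar x\in\domain$, at which the penalty vanishes. Since $T^{\delta_n}_{\alpha_n}(x_n)\le T^{\delta_n}_{\alpha_n}(\bar x)=\norm{F(\bar x)-y^{\delta_n}}{Y}^{2}$, and the discrepancy principle forces $\norm{F(x_n)-y^{\delta_n}}{Y}=C\delta_n$, I obtain
$$\alpha_n\,\norm{B(x_n-\bar x)}{X}^{2}\le \norm{F(\bar x)-y^{\delta_n}}{Y}^{2}\le\lr{\norm{F(\bar x)-y}{Y}+\delta_n}^{2},$$
whose right-hand side stays bounded as $\delta_n\to 0$. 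Because $\alpha_n\ge\alpha_0$, this shows that $\norm{B(x_n-\bar x)}{X}$ is bounded, hence, using $\bar x\in\domain(B)$, that $\{x_n\}$ is bounded in $X_1=\domain(B)$, and by \eqref{eq:m} also in $X$. I would then pass to a subsequence with $x_n\rightharpoonup\tilde x$ weakly in $X_1$, so that $\tilde x\in\domain(B)$; since this weak convergence also holds in $X$ and $\domain(F)$ is convex and closed, hence weakly closed, the limit lies in $\domain(F)$ as well, giving $\tilde x\in\domain$.

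It remains to identify $\tilde x$ with $\xp$. From the discrepancy principle and \eqref{eq:noise} one has $\norm{F(x_n)-y}{Y}\le (C+1)\delta_n\to 0$, so $F(x_n)\to y$ strongly, hence weakly, in $Y$; weak sequential continuity of $F$ gives $F(x_n)\rightharpoonup F(\tilde x)$, and uniqueness of weak limits yields $F(\tilde x)=y=F(\xp)$. The lower bound in \eqref{eq:twosides} then gives $c_{a}\norm{\tilde x-\xp}{-a}\le\norm{F(\tilde x)-F(\xp)}{Y}=0$, and since $B^{-a}$ is bounded and injective, $\norm{\tilde x-\xp}{-a}=0$ forces $\tilde x=\xp$. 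But $\tilde x\in\domain(B)$, whence $\xp\in\domain(B)$, contradicting Assumption~\ref{ass:smooth}. The main obstacle is precisely this compactness and limit-identification step: one must guarantee that the weak limit simultaneously inherits the $\domain(B)$-smoothness and lies in $\domain(F)$, which is where the weak closedness of $\domain(F)$ and the weak sequential continuity of $F$ from Assumption~\ref{ass:basic} are indispensable, and where the lower bound of Assumption~\ref{ass:nonlinearity} converts the equation $F(\tilde x)=F(\xp)$ into $\tilde x=\xp$.
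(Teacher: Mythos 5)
Your proof is correct, and it takes a genuinely streamlined variant of the paper's contradiction argument. Both proofs share the same skeleton: assume $\alpha_n\ge\alpha_0>0$ along $\delta_n\to 0$, use the comparison with $\bar x$ (whose penalty vanishes) to bound the penalty uniformly, extract a weak $X_1$-limit lying in $\domain(F)\cap\domain(B)$ via weak closedness of $\domain(F)$, and derive a contradiction from the left inequality of \eqref{eq:twosides} together with $\xp\notin\domain(B)$. The difference is which sequence is analyzed. The paper does \emph{not} work with the discrepancy-principle minimizers themselves: it freezes the parameter at the lower bound $\underline{\alpha}$, studies the regularized solutions $x_{\underline{\alpha}}^{\delta_n}$, and then needs the non-decreasing property of the discrepancy $\alpha\mapsto\norm{F(\xad)-\yd}{Y}$ to pass from $C\delta_{n_k}=\|F(x_{\alpha_{n_k}}^{\delta_{n_k}})-y^{\delta_{n_k}}\|_Y$ down to $\|F(x_{\underline{\alpha}}^{\delta_{n_k}})-y^{\delta_{n_k}}\|_Y$, concluding via weak lower semicontinuity of the norm that $0=\liminf_k C\delta_{n_k}\ge\|F(z)-F(\xp)\|_Y>0$. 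You instead bound the penalty of the actual minimizers $x_n=x_{\alpha_n}^{\delta_n}$ directly, using $\alpha_n\ge\alpha_0$ in $T^{\delta_n}_{\alpha_n}(x_n)\le T^{\delta_n}_{\alpha_n}(\bar x)$, and identify the weak limit through the strong convergence $F(x_n)\to y$ forced by the discrepancy equality \eqref{eq:dp1}, plus uniqueness of weak limits, to get $F(\tilde x)=y=F(\xp)$. This buys two things: you never invoke the monotonicity of the discrepancy in $\alpha$ (a standard fact, but one the paper uses without proof), and the liminf/weak-lsc chain is replaced by the cleaner identification $F(\tilde x)=y$; as a by-product your identification step essentially proves Remark~\ref{rem:penalty} (blow-up of the penalty along $\adp$) as well. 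What the paper's detour buys is mainly conceptual: working at a fixed $\underline{\alpha}$ isolates the classical intuition behind the discrepancy principle, namely that at a frozen parameter the residual cannot be driven to zero, which here is quantified by $\|F(z)-F(\xp)\|_Y>0$. Both routes rest on exactly the same hypotheses, and your argument is complete as written.
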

\begin{proof} Consider a sequence~$\delta_n \to 0,$  and an
  associated sequence of noisy data $y^{\delta_n} \in Y$ with
  $\|F(\xp)-y^{\delta_n}\|_Y \le \delta_n$. Let~$\alpha_{n}$ be the obtained
  sequence of regularization parameters according to the discrepancy
  principle, hence
  with~$\|F(x_{\alpha_n}^{\delta_n})-y^{\delta_n}\|_Y=C\delta_n$.

Suppose to the contrary that there is~$\underline \alpha>0$ such
that~$\alpha_n \ge \underline \alpha$ for all $n\in \mathbb{N}$.
 Because
  $\bar{x} \in \domain(F) \cap \domain(B)$, all regularized solutions
  with regularization parameter $\underline \alpha>0$ and data $y^{\delta_n}$
  satisfy the inequality
$$\|F(x_{\underline{\alpha}}^{\delta_n})-y^{\delta_n}\|_Y^2+\underline \alpha\,\|B(x_{\underline{\alpha}}^{\delta_n} -\bar{x})\|_X^2  \le \|F(\bar{x})-y^{\delta_n}\|_Y^2,$$
which implies that there is some $K>0$ such that
$m\|x_{\underline{\alpha}}^{\delta_n} -\bar{x}\|_X \le
\|B(x_{\underline{\alpha}}^{\delta_n} -\bar{x})\|_X \le K$.
Hence, there is a weakly convergent sub-sequence
$x_{\underline{\alpha}}^{\delta_{n_k}} \rightharpoonup z$ in the
Hilbert space $X_1 = \domain(B)$.  The limit element $z$ is also the weak limit in
X and belongs to $\domain(F) \cap \domain(B)$, because the norm is
lower semi-continuous and $\domain(F)$ is weakly closed. Then the
non-decreasing property of the discrepancy norm in Tikhonov
regularization with respect to $\alpha$, the weak sequential
continuity of $F$ and the local injectivity of $F$ at $\xp$ expressed
by the left inequality of \eqref{eq:twosides} together with the
properties $z \in \domain(B)$  and $\xp \notin \domain(B)$ allow
us to bound as follows:
$$
0= \liminf\limits_{k\to \infty} C\delta_{n_k} =
\liminf\limits_{k\to \infty}
\|F(x^{\delta_{n_k}}_{\alpha_{n_k}})-y^{\delta_{n_k}}\|_Y $$ $$ \ge
\liminf\limits_{k\to \infty}
\|F(x^{\delta_{n_k}}_{\underline{\alpha}})-y^{\delta_{n_k}}\|_Y  =
\|F(z)-F(\xp)\|_Y>0.
$$
This is a contradiction, and completes the proof of the lemma.
\end{proof}

\begin{remark}\label{rem:penalty}
  It can be seen from the reasoning in the above proof, that under the
  present assumptions the penalty~$\|B(\xad -\bar{x})\|_X^{2}$ must
  tend to infinity  for the
  corresponding~$\alpha = \adp$, provided that~$\delta\to 0$.
\end{remark}

\pagebreak

The main result is the following.
\begin{thm}
Consider the operator equation~$F(x) = y$ in Hilbert spaces, with forward operator $F\colon
\domain(F)\subset X\to Y$  which obeys Assumptions~\ref{ass:basic}
and~\ref{ass:nonlinearity}. Let~$\xad$ be the minimizer of the Tikhonov minimization problem
(\ref{eq:tikhonov}), where the regularization parameter~$\alpha=\adp$ is
chosen according to the discrepancy principle.
 If
  $\xp \in \domain(F)$ has smoothness as in Assumption~\ref{ass:smooth} for some
  $0< p <1$ (oversmoothing penalty), then $\xad$ yields the convergence rate
  $$
\norm{\xad - \xp}{} = \bigo(\delta^{p/(a+p)})\quad \mbox{as} \quad \delta \to 0.
$$
% provided that $2p+a \geq 1$.
\end{thm}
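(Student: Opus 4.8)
The plan is to derive the convergence rate by combining the two-sided nonlinearity estimate~\eqref{eq:twosides} with an interpolation inequality in the Hilbert scale, using the discrepancy principle to control the residual and a minimizing comparison against~$\xp$ to control the penalty. Let me think carefully about how these pieces fit together.

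We want to bound $\|\xad - \xp\|_X = \|\xad - \xp\|_0$. The natural tool is interpolation between the weak norm $\|\cdot\|_{-a}$ (which is controlled by the residual via the lower bound in~\eqref{eq:twosides}) and a stronger norm. Since $\xp \in \domain(B^p)$ with $p < 1$ but $\xp \notin \domain(B)$, we cannot use the $X_1$ norm on $\xp$. So we should interpolate $\|\cdot\|_0$ between $\|\cdot\|_{-a}$ and $\|\cdot\|_p$.

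The interpolation inequality: for $-a \le 0 \le p$, writing $0 = \theta(-a) + (1-\theta)p$, so $\theta = p/(a+p)$, we get
$$\|\xad - \xp\|_0 \le \|\xad - \xp\|_{-a}^{\theta} \|\xad - \xp\|_p^{1-\theta} = \|\xad - \xp\|_{-a}^{p/(a+p)} \|\xad - \xp\|_p^{a/(a+p)}.$$

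Now for the two factors:

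Factor 1 — the weak norm. From the lower bound in~\eqref{eq:twosides}, $c_a \|\xad - \xp\|_{-a} \le \|F(\xad) - F(\xp)\|_Y$. By the triangle inequality and the noise model, $\|F(\xad) - F(\xp)\|_Y \le \|F(\xad) - \yd\|_Y + \|\yd - F(\xp)\|_Y \le C\delta + \delta = (C+1)\delta$, where we used the discrepancy principle~\eqref{eq:dp1} for the first term (noting $y = F(\xp)$). So $\|\xad - \xp\|_{-a} \le \frac{C+1}{c_a}\delta = \bigo(\delta)$.

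Factor 2 — the penalty-type norm. We need to bound $\|\xad - \xp\|_p = \|B^p(\xad - \xp)\|_X$. Write $B^p(\xad - \xp) = B^p(\xad - \bar x) - B^p(\xp - \bar x)$. The second term is bounded by $E$ by Assumption~\ref{ass:smooth}. For the first term we need another interpolation, now pulling $\|\cdot\|_p$ up between $\|\cdot\|_0$ and $\|\cdot\|_1$: with $p = (1-p)\cdot 0 + p \cdot 1$,
$$\|\xad - \bar x\|_p \le \|\xad - \bar x\|_0^{1-p}\|\xad - \bar x\|_1^{p}.$$
Here $\|\xad - \bar x\|_1 = \|B(\xad - \bar x)\|_X$ is the square-root of the penalty. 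To bound the penalty we cannot compare directly with $\xp$ since $T_\alpha^\delta(\xp) = \infty$. The standard trick is to introduce an auxiliary smooth element $\xaux$ (approximating $\xp$ within $\domain(B)$) and to use the minimizing property $T_\alpha^\delta(\xad) \le T_\alpha^\delta(\xaux)$; the paper's notation $\xaux, \aux$ suggests exactly this. One constructs $\xaux$ (for instance $\xaux - \bar x = (I + t B)^{-1}(\xp - \bar x)$ or a spectral cutoff of $\xp - \bar x$) so that $\|B(\xaux - \bar x)\|_X$ and $\|F(\xaux) - \yd\|_Y$ are simultaneously controlled in terms of $E$ and $\delta$, yielding a bound on the penalty of the form $\alpha\|B(\xad - \bar x)\|_X^2 \le \bigo(\delta^2) + \bigo(\delta^{2p/(?)})$. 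Combined with a lower bound $\alpha \gtrsim \delta^{2a/(a+p)}$ (obtained by relating $\alpha$ to the residual $C\delta$ via the discrepancy principle and the nonlinearity estimate), this gives $\|\xad - \bar x\|_1^p = \bigo(\delta^{-a/(a+p)})$ up to constants.

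The main obstacle, and the step requiring genuine care, is controlling the penalty factor (Factor 2). Because $\xp \notin \domain(B)$ the naive comparison $T_\alpha^\delta(\xad) \le T_\alpha^\delta(\xp)$ is vacuous, so the whole convergence-rate machinery must be rebuilt around the auxiliary element. The delicate points are: (i) constructing $\xaux \in \domain(B)$ whose $B$-norm blows up at a controlled rate while its misfit $\|F(\xaux)-\yd\|_Y$ stays at the order of $\delta$ (using both inequalities in~\eqref{eq:twosides} to convert between the $X_{-a}$-distance and the residual), and (ii) pinning down the exact rate of $\alpha = \adp$ from below, which feeds into the interpolation exponents. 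Balancing these so that the product $\|\xad-\xp\|_{-a}^{p/(a+p)}\cdot\|\xad-\xp\|_p^{a/(a+p)}$ collapses to exactly $\delta^{p/(a+p)}$ is where all the exponent bookkeeping must come out right; I expect this to be the crux of the argument, with the two-sided structure of Assumption~\ref{ass:nonlinearity} doing the essential work of transferring estimates between the image space $Y$ and the scale norm $X_{-a}$.
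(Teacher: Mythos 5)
Your skeleton (interpolate $\norm{\xad-\xp}{0}\leq \norm{\xad-\xp}{-a}^{p/(a+p)}\norm{\xad-\xp}{p}^{a/(a+p)}$, control the first factor by $\bigo(\delta)$ via the lower bound in \eqref{eq:twosides} and the discrepancy principle, and reduce everything to a uniform bound on $\norm{B^{p}(\xad-\bar x)}{X}$) matches the paper exactly up to that point. But your route to the second factor contains a genuine gap that would sink the rate. You propose to bound $\norm{\xad-\bar x}{p}\leq \norm{\xad-\bar x}{0}^{1-p}\norm{\xad-\bar x}{1}^{p}$, i.e., to interpolate between the $0$- and $1$-norms. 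First, this is circular: $\norm{\xad-\bar x}{0}$ is, up to the fixed constant $\norm{\xp-\bar x}{0}$, the very quantity being estimated. Second, even granting $\norm{\xad-\bar x}{0}=\bigo(1)$ (which is the best one can hope for, since $\xad\to\xp\neq\bar x$) and the sharp penalty bound $\norm{\xad-\bar x}{1}=\bigo\bigl(\delta^{(p-1)/(a+p)}\bigr)$, your chain yields $\norm{\xad-\xp}{p}=\bigo\bigl(\delta^{p(p-1)/(a+p)}\bigr)$, which blows up, and feeding it back gives only $\norm{\xad-\xp}{0}=\bigo\bigl(\delta^{p^{2}(1+a)/(a+p)^{2}}\bigr)$; since $p^{2}(1+a)/(a+p)^{2}<p/(a+p)$ for all $p<1$, this is strictly suboptimal. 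The paper's decisive move, which you miss, is to interpolate the \emph{difference} $\xaux-\xad$ between the $-a$- and $1$-norms: both elements are $\bigo(\delta)$-close to $\xp$ in $X_{-a}$, so $\norm{\xaux-\xad}{-a}=\bigo(\delta)$, and then
\begin{equation*}
\norm{\xaux-\xad}{p}\leq \norm{\xaux-\xad}{1}^{\frac{a+p}{a+1}}\norm{\xaux-\xad}{-a}^{\frac{1-p}{a+1}}
=\bigo\Bigl(\delta^{\frac{p-1}{a+p}\cdot\frac{a+p}{a+1}}\cdot\delta^{\frac{1-p}{a+1}}\Bigr)=\bigo(1),
\end{equation*}
the extra factor of $\delta$ from the $-a$ endpoint exactly cancelling the blow-up of the $1$-norm. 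This uniform bound, together with $\norm{\xaux-\bar x}{p}\leq E$, is what delivers $\tilde E$.

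A second, related gap is your treatment of the penalty and of the regularization parameter. You plan to bound $\alpha\norm{B(\xad-\bar x)}{X}^{2}$ by the minimality comparison and then divide by a lower bound $\adp\gtrsim\delta^{2a/(a+p)}$ (the exponent should in any case be $2(a+1)/(a+p)$, cf.~\eqref{eq:aux-lb}). But no method is given to prove a lower bound on $\adp$ itself, and for a non-linear problem the minimizer $\xad$ has no explicit spectral representation from which such a bound could be read off; the paper proves the lower bound only for the \emph{auxiliary} parameter $\aux$, where the linear problem \eqref{eq:Tikh-aux} is explicitly solvable. The paper avoids needing any lower bound on $\adp$ altogether by calibrating the auxiliary element through $\norm{F(\xaux)-F(\xp)}{Y}=(C-1)\delta$, so that $\norm{F(\xaux)-\yd}{Y}\leq C\delta$ matches the discrepancy value exactly: in the comparison $T^{\delta}_{\alpha}(\xad)\leq T^{\delta}_{\alpha}(\xaux)$ the terms $(C\delta)^{2}$ cancel and one gets $\norm{B(\xad-\bar x)}{X}\leq\norm{B(\xaux-\bar x)}{X}$ directly, with no division by $\alpha$. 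Without this calibration trick, and without switching your interpolation endpoints from $(0,1)$ to $(-a,1)$ on the difference $\xaux-\xad$, the exponent bookkeeping you correctly identify as the crux cannot come out to $\delta^{p/(a+p)}$.
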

We postpone a discussion of this result to
Section~\ref{sec:discussion}, and we first  prove the theorem in the next section.

\section{Proof of the main result}
\label{sec:proof-main}

In various places we shall use interpolation inequalities in the Hilbert scale, which are generated by
the operator~$B$, see e.g.~\cite[Prop.~8.19]{EHN96}. These inequalities ensure,
for given numbers~$-a < r \leq s$, that we obtain
\begin{equation}
  \label{eq:interpol}
  \norm{x}{r} \leq
  \norm{x}{s}^{\frac{r+a}{s+a}}\norm{x}{-a}^{\frac{s-r}{s+a}},\quad
  x\in X_{s}= \domain(B^{s}).
\end{equation}
\medskip

In a first step we bound the error in the weak norm exploiting the
lower bound of the inequality chain \eqref{eq:twosides}.
\begin{lemma}\label{lem:-a-norm}
  Suppose that the Assumptions~\ref{ass:basic} and
  \ref{ass:nonlinearity} hold. If the regularization
  parameter~$\alpha=\adp$ is chosen according to the discrepancy
  principle then
  \begin{equation}
    \label{eq:-a-norm}
    \norm{\xad - \xp}{-a} \leq \left(\frac{C+1}{c_{a}}\right)\delta.
  \end{equation}
\end{lemma}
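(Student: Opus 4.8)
The plan is to prove Lemma~\ref{lem:-a-norm} directly from the lower bound in the non-linearity assumption~\eqref{eq:twosides} combined with the defining equation of the discrepancy principle~\eqref{eq:dp1} and a triangle inequality involving the noise level. The key observation is that the lower bound $c_a \norm{x-\xp}{-a} \le \norm{F(x)-F(\xp)}{Y}$ lets me transfer control from the (weak) $X_{-a}$-norm of the error to the residual in the data space $Y$, and the discrepancy principle together with the noise model gives an explicit bound on that residual.

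Concretely, I would first apply the left inequality of~\eqref{eq:twosides} with $x = \xad$, which is legitimate since $\xad \in \domain$, to obtain
\begin{equation*}
  c_a \norm{\xad - \xp}{-a} \le \norm{F(\xad) - F(\xp)}{Y}.
\end{equation*}
Next I would bound the right-hand side by inserting $\yd$ via the triangle inequality,
\begin{equation*}
  \norm{F(\xad) - F(\xp)}{Y} \le \norm{F(\xad) - \yd}{Y} + \norm{\yd - F(\xp)}{Y}.
\end{equation*}
The first term equals $C\delta$ by the discrepancy principle~\eqref{eq:dp1}, and the second is at most $\delta$ by the noise model~\eqref{eq:noise}, since $F(\xp) = y$. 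Combining these gives $c_a \norm{\xad - \xp}{-a} \le (C+1)\delta$, and dividing by $c_a$ yields exactly the claimed estimate~\eqref{eq:-a-norm}.

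There is essentially no hard step here; the entire argument is a short chain of inequalities. The only things to verify carefully are that $\xad$ lies in $\domain$ so that~\eqref{eq:twosides} applies (guaranteed by the construction of the regularized solution as a minimizer over $\domain$), that the discrepancy parameter $\adp$ actually exists so that the equality~\eqref{eq:dp1} holds (assumed in the standing hypotheses, and at least for sufficiently small $\delta$), and that $F(\xp) = y$ so that the noise bound~\eqref{eq:noise} can be applied to $\norm{\yd - F(\xp)}{Y}$. I expect the proof to be a clean three-line computation with no genuine obstacle, its role being merely to establish the base-level weak-norm estimate that will later be interpolated against a smoothness bound in the $X_p$-norm to reach the order-optimal rate $\delta^{p/(a+p)}$.
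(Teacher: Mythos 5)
Your proof is correct and is precisely the argument the paper relies on (the paper states Lemma~\ref{lem:-a-norm} without writing out the proof, since it is exactly this three-line chain: the left inequality of~\eqref{eq:twosides} applied to $\xad\in\domain$, the triangle inequality through $\yd$, the discrepancy equation~\eqref{eq:dp1}, and the noise model~\eqref{eq:noise} with $F(\xp)=y$). Your side remarks on the applicability of~\eqref{eq:twosides} and the assumed existence of $\adp$ match the paper's standing hypotheses, so there is nothing to add.
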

We mention the following consequence.
\begin{pro}\label{pro:interpol-tilde}
Let~$\alpha = \adp$ be chosen according to the discrepancy principle.
  Suppose that~$\norm{B^{p}(\xad - \bar x)}{X}\leq \tilde E$ holds
  true for some constant $\tilde E>0$. Then we have under
  Assumptions~\ref{ass:basic}--\ref{ass:smooth} that
  $$
  \norm{\xad - \xp}{X } \leq (E + \tilde E)^{a/(a+p)}
  \left(\frac{C+1}{c_{a}}\, \delta\right)^{p/(a+p)}.
$$
\end{pro}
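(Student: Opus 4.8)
The plan is to derive the bound by a single application of the interpolation inequality~\eqref{eq:interpol}, splitting the error $\xad - \xp$ between the weak norm $\norm{\cdot}{-a}$, where Lemma~\ref{lem:-a-norm} already furnishes control, and the strong norm $\norm{\cdot}{p}$, where the smoothness assumption together with the standing hypothesis provides control. First I would check applicability: since $0 < p < 1$ we have $\domain(B) = X_{1} \subseteq X_{p}$, and every minimizer $\xad$ lies in $\domain \subseteq \domain(B)$, while $\xp \in \domain(B^{p}) = X_p$ by Assumption~\ref{ass:smooth}. Hence $\xad - \xp \in X_{p}$ and~\eqref{eq:interpol} may legitimately be used.

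Next I would instantiate~\eqref{eq:interpol} with the indices $r = 0$ and $s = p$, which is admissible because $-a < 0 \leq p$ holds (recall $a>0$ and $p>0$). Applied to $x = \xad - \xp$ this gives
\begin{equation*}
  \norm{\xad - \xp}{X} = \norm{\xad - \xp}{0} \leq \norm{\xad - \xp}{p}^{a/(a+p)}\,\norm{\xad - \xp}{-a}^{p/(a+p)} .
\end{equation*}
The exponents $a/(a+p)$ and $p/(a+p)$ already match those in the claim, so it only remains to insert the two available bounds into the two factors.

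For the strong-norm factor I would use the reference element $\bar x$ and the triangle inequality, writing $\xad - \xp = (\xad - \bar x) - (\xp - \bar x)$ and estimating
\begin{equation*}
  \norm{\xad - \xp}{p} \leq \norm{\xad - \bar x}{p} + \norm{\xp - \bar x}{p} \leq \tilde E + E,
\end{equation*}
where the first summand is bounded by the standing hypothesis $\norm{B^{p}(\xad - \bar x)}{X} \leq \tilde E$ and the second by Assumption~\ref{ass:smooth}, since $\xp - \bar x \in \mpe$ entails $\norm{\xp - \bar x}{p} \leq E$. For the weak-norm factor I would invoke Lemma~\ref{lem:-a-norm} directly, namely $\norm{\xad - \xp}{-a} \leq \frac{C+1}{c_{a}}\,\delta$. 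Substituting both bounds into the displayed interpolation inequality yields the assertion.

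This argument is essentially mechanical, so I would not anticipate a genuine obstacle within the proof of the proposition itself. The only points requiring care are the domain check that makes~\eqref{eq:interpol} applicable, and keeping the interpolation exponents straight so that they reduce to $a/(a+p)$ and $p/(a+p)$. The real difficulty lies outside this statement: establishing a uniform bound of the form $\norm{B^{p}(\xad - \bar x)}{X} \leq \tilde E$ — which is here taken as a hypothesis but must eventually be secured for $\alpha = \adp$ — is where the substantive work of the main proof will be concentrated.
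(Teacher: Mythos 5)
Your proposal is correct and coincides with the paper's own proof: both apply the interpolation inequality~\eqref{eq:interpol} with $r=0$, $s=p$, bound $\norm{\xad-\xp}{p}\leq E+\tilde E$ from the hypothesis together with Assumption~\ref{ass:smooth}, and insert the estimate of Lemma~\ref{lem:-a-norm} for the $\norm{\cdot}{-a}$ factor. Your additional domain check ($\xad\in\domain\subseteq X_1\subseteq X_p$) and your closing remark that the substantive work lies in establishing the hypothesized bound (done in Lemma~\ref{lem:bounding}) are both accurate.
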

\begin{proof}
  From the bound on~$\norm{B^{p}(\xad - \bar x)}{X}$ and
  Assumption~\ref{ass:smooth} we find
  that~$\norm{\xad - \xp}{p}\leq E + \tilde E$.  Now we use the
  interpolation inequality~(\ref{eq:interpol}) with~$r=0,\ s=p$, yielding
$$
\norm{\xad - \xp}{X}\leq \norm{\xad - \xp}{p}^{a/(a+p)}\norm{\xad -
  \xp}{-a} ^{p/(a+p)}.
$$
With lemma~\ref{lem:-a-norm} this completes the proof.
\end{proof}
We thus make the important observation that  it is enough to show
that~$\norm{B^{p}(\xad - \bar x)}{X}\leq \tilde E$ holds
true  as~$\delta\to 0$ and hence~$\alpha =\adp\to 0$.

\subsection{Auxiliary linear problem and auxiliary element}
\label{sec:aux-problem}

For the subsequent analysis the following linear Tikhonov
regularization will be important.  Consider the Tikhonov functional
\begin{equation}
  \label{eq:Tikh-aux}
  \Jaa (x) := \norm{x - \xp}{-a}^{2} + \alpha \norm{B (x - \bar x)}{X}^{2},\quad
  x\in X.
\end{equation}
\begin{pro}\label{pro:aux-problem}
Suppose that~$\xp \in \mpe$ for some $0<p<1$.
  Given $\alpha>0$ let~$\xa$ be the minimizer of~$\Jaa$.
Then
  \begin{align}
    \norm{\xa - \xp}{X} &\leq E \alpha^{p/(2(a+1))}, \label{it:xa-xp}\\
 \norm{B^{-a}(\xa - \xp)}{X} &\leq E \alpha^{(a+p)/(2(a+1))},\label{it:xa-xp-a}\\
\norm{B(\xa - \bar x)}{X} & \leq E \alpha^{(p-1)/(2(a+1))}, \label{it:B-xa}
    \intertext{and}
    \Jaa(\xa) & \leq 2E^{2} \alpha^{(a+p)/(a+1)}.\label{it:Jaa}
  \end{align}
Moreover, we have that
\begin{equation}
  \label{eq:xa-p-norm}
  \norm{\xa - \bar x}{p} \leq E,\  \text{and}\   \norm{\xa - \xp}{p} \leq E.
\end{equation}
\end{pro}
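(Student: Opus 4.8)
The plan is to exploit that the auxiliary functional $\Jaa$ is quadratic, so that its minimizer is available in closed form, and then to read off all four estimates from spectral calculus for the self-adjoint operator $B$. First I would set $w := \xp - \bar x$, recording that the source condition of Assumption~\ref{ass:smooth} gives $\norm{B^{p}w}{X}\le E$. Since $\norm{x-\xp}{-a}^{2}=\norm{B^{-a}(x-\xp)}{X}^{2}$ and $\norm{B(x-\bar x)}{X}^{2}$ are both quadratic, setting the gradient of $\Jaa$ to zero yields the normal equation
\[
(B^{-2a}+\alpha B^{2})(\xa-\bar x)=B^{-2a}w,
\]
whence $\xa-\bar x=(B^{-2a}+\alpha B^{2})^{-1}B^{-2a}w$ and, subtracting $w$, $\xa-\xp=-\alpha(B^{-2a}+\alpha B^{2})^{-1}B^{2}w$.

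Because $B$ is self-adjoint, strictly positive and obeys \eqref{eq:m}, its spectrum lies in $[m,\infty)$, so I would express both quantities through filter functions of $B$ applied to $w$. Multiplying numerator and denominator by $\lambda^{2a}$ shows that $\xa-\bar x$ corresponds to the filter $g_{\alpha}(\lambda)=(1+\alpha\lambda^{2a+2})^{-1}$, while the residual $\xa-\xp$ corresponds (up to sign) to $\phi_{\alpha}(\lambda)=\alpha\lambda^{2a+2}(1+\alpha\lambda^{2a+2})^{-1}$. The heart of the argument is a single elementary estimate: for each target bound the substitution $s=\alpha\lambda^{2a+2}$ extracts the requisite power of $\alpha$ and reduces the supremum over the spectrum to $\sup_{s\ge0}s^{\theta}/(1+s)\le1$, valid for $\theta\in[0,1]$. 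Weighting by $\lambda^{p}$ gives $\phi_{\alpha}(\lambda)\le\alpha^{p/(2(a+1))}\lambda^{p}$, hence \eqref{it:xa-xp}; the extra factor $\lambda^{-a}$ gives $\lambda^{-a}\phi_{\alpha}(\lambda)\le\alpha^{(a+p)/(2(a+1))}\lambda^{p}$, hence \eqref{it:xa-xp-a}; and the same treatment of $\lambda g_{\alpha}(\lambda)$ yields \eqref{it:B-xa}. In each case I would integrate the pointwise bound against the spectral measure of $w$ and invoke $\norm{B^{p}w}{X}\le E$.

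The estimate \eqref{it:Jaa} then follows by addition: $\Jaa(\xa)=\norm{B^{-a}(\xa-\xp)}{X}^{2}+\alpha\norm{B(\xa-\bar x)}{X}^{2}$, where \eqref{it:xa-xp-a} bounds the first summand by $E^{2}\alpha^{(a+p)/(a+1)}$ and \eqref{it:B-xa}, after multiplication by $\alpha$, bounds the second by the same quantity, producing the stated factor $2$. Finally \eqref{eq:xa-p-norm} is immediate once one observes $0\le\phi_{\alpha}\le1$ and $0\le g_{\alpha}\le1$: inserting the weight $\lambda^{p}$ and using these uniform bounds gives $\norm{\xa-\xp}{p}\le\norm{B^{p}w}{X}\le E$ and $\norm{\xa-\bar x}{p}\le\norm{B^{p}w}{X}\le E$.

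I expect the delicate points to be bookkeeping rather than a genuine obstacle. First, one must verify that every exponent $\theta$ arising in $s^{\theta}/(1+s)$ indeed lies in $[0,1]$; this is precisely where the hypotheses $0<p<1$ and $a>0$ are used. Second, and conceptually the key feature of the oversmoothing regime, one cannot bound $\Jaa(\xa)$ by the minimizing comparison with $\xp$, since $\xp\notin\domain(B)$ forces $\Jaa(\xp)=\infty$; the closed-form minimizer circumvents this difficulty, and one should note in passing that the fast decay of $g_{\alpha}$ guarantees convergence of the spectral integral for $\norm{B(\xa-\bar x)}{X}$, so that $\xa\in\domain(B)$ even though $\xp\notin\domain(B)$.
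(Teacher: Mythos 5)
Your proof is correct and takes essentially the same route as the paper: both obtain the closed-form minimizer $\xa = \bar x + \lr{B^{-2a}+\alpha B^{2}}^{-1}B^{-2a}(\xp-\bar x)$ and deduce all five bounds by spectral calculus applied to $B^{p}(\xp-\bar x)$, precisely so as to avoid the unusable comparison $\Jaa(\xa)\leq\Jaa(\xp)=\infty$. The only difference is cosmetic: the paper substitutes $H=B^{-2(a+1)}$ and cites the qualification bound $\ra(t)\,t^{q}\leq\alpha^{q}$ for $0<q\leq 1$ of linear Tikhonov regularization, whereas you verify the equivalent elementary inequality $\sup_{s\geq 0}s^{\theta}/(1+s)\leq 1$, $\theta\in[0,1]$, directly in the spectral variable of $B$ via the substitution $s=\alpha\lambda^{2a+2}$.
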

\begin{proof}
  The minimizer~$\xa$ has the explicit form
  \begin{equation*}
    % \label{eq:xa}
    \xa = \bar x + \lr{B^{-2a} + \alpha B^{2}}^{-1} B^{-2a}(\xp - \bar
    x).
  \end{equation*}
  Introducing the bounded self-adjoint operator~$H:= B^{-2(a+1)}$ we
  can rewrite this as
  $ \xa = \bar x + \lr{\alpha I + H}^{-1} H (\xp - \bar x).  $ Hence,
  with $$\ra(t) = \alpha/(t+\alpha),\;\; t>0\,,$$ we find
$$
\norm{\xa - \xp}{X} = \norm{\ra(H)( \xp - \bar x)}{X}.
$$
Now we use that~$\xp\in\mpe$. First, this yields
$$
\norm{\xa - \xp}{X} \leq E \norm{\ra(H)H^{p/(2(a+1))}}{X \to X}\leq E
\alpha^{p/(2(a+1))},
 $$
 since $\ra(t) t^q \leq \alpha^q$ whenever~$0 < q \leq 1, \ t>0,$ due
 to the qualification of linear Tikhonov regularization.  Similarly, because of~$0<p < 1< a+2$, we obtain the estimate
 \begin{align*}
   \norm{B^{-a}(\xa - \xp)}{X} & \leq E \norm{B^{-(a+p)}\ra(H) }{X\to X} = E
                          \norm{\ra(H) H^{(a+p)/(2(a+1))} }{X\to X}  \\
                        & \leq E \alpha^{(a+p)/(2(a+1))},
 \end{align*}
 by using the qualification of Tikhonov regularization, again.

 In the same manner we bound~$\norm{B(\xa - \bar x)}{X}$ as
$$
\norm{B(\xa - \bar x)}{X} \leq E \alpha^{(p-1)/(2(a+1))},
$$
and the simple calculation
$$
\alpha\norm{B(\xa - \bar x)}{X}^{2} \leq E^{2} \alpha^{\lr{1 +
    (p-1)/(a+1)}} = E^{2} \alpha^{(a+ p)/(a+1)}
$$
completes the bound for the Tikhonov functional~$\Jaa$.
For the final assertion we start with bounding~$\norm{\xa - \bar x} p$
as
$$
\norm{\xa - \bar x}{p} = \norm{B^{p} \lr{\alpha I + H}^{-1} H(\xp- \bar x)}{X} \leq
\norm{B^{p}(\xp- \bar x)}{X\to X} \leq E,
$$
and similarly
$$
\norm{\xa - \xp}{p} = \norm{B^{p}\ra(H)(\xp- \bar x)}{X} \leq
\norm{B^{p}(\xp- \bar x)}{X} \leq E.
$$
The proof is complete.
\end{proof}
\begin{remark}\label{rem:xa-near}
  From the first
  bound~$\norm{\xa - \xp}{X} \leq E \alpha^{p/(2(a+1))}$ we see that,
  for $\alpha>0$ small enough, the element~$\xa$ from the auxiliary
  linear problem will belong to the domain~$\domain(F)$, because $\xp$
  is an inner point of $\domain(F)$.
\end{remark}

Since the approximations~$\xad$ are smoother than the solution
element~$\xp$, with respect to the operator $B$ generating the Hilbert scale, we shall employ some intermediate element of higher
smoothness. The principal idea of using such auxiliary elements was
already mentioned in~\cite{Plato90} (see also
\cite[Theorem~10.7]{EHN96} and \cite{Jin-Hou1999,LPR07} with more connection to
our approach). To this end we shall employ in this context an element~$\xa=\xaux$ from
the family $\xa\;(\alpha >0)$ of regularized solutions minimizing the
functional \eqref{eq:Tikh-aux}, with properties listed in
Proposition~\ref{pro:aux-problem}, but for the parameter~$\alpha = \aux(\delta)$
depending on the noise level $\delta$ according to a specific discrepancy principle given below in formula \eqref{eq:choice-alpha-xa}.
\begin{definition}
  [Auxiliary element]\label{def:auxiliary}
Consider  the family $\xa\;(\alpha >0)$ of regularized solutions minimizing the functional \eqref{eq:Tikh-aux} and
let $\delta>0$ be sufficiently small such that there is some $\alpha=\aux(\delta)$ satisfying the equation
\begin{equation}
  \label{eq:choice-alpha-xa}
  \norm{F(\xa) - F(\xp)}{Y} = (C-1) \delta,
\end{equation}
with constant $C>1$ as used in \eqref{eq:dp1}.
Then we call the corresponding element $\xaux := x_{\aux}(\delta)$ \emph{auxiliary element}.
\end{definition}

The auxiliary element $\xaux \in \domain(F)$ exists for sufficiently small $\delta>0$, because $\xp$ is an inner point of $\domain(F)$ and we have for $\alpha \to 0$ that \linebreak
$\|\xa-\xp\|_X \to 0$ and thus, as a consequence of the right inequality in \eqref{eq:twosides}, $\|F(\xa)-F(\xp)\|_Y \to 0$.
Moreover, the element $\xaux$ is uniquely determined and  has the following optimality properties.
\begin{pro}\label{pro:aux-element}
  The auxiliary element obeys
  \begin{align}\norm{\xaux(\delta) - \xp}{X} &
\leq E^{a/(a+p)} \lr{\frac{C}{c_{a}}\delta}^{p/(a+p)},\\
\norm{B(\xaux(\delta) - \bar x)}{X}
& \leq E \lr{\frac{C_{a}
  E}{C-1}\delta}^{\frac{p-1}{a+p}}, \label{it:B-xaux-barx}
  \intertext{and}
\norm{\xaux(\delta) - \xp}{-a} & \leq \frac{C-1}{c_{a}}\delta.\label{it_-a-bound}
  \end{align}
\end{pro}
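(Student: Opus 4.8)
The plan is to insert the parameter choice $\alpha=\aux(\delta)$ into the $\alpha$-dependent estimates of Proposition~\ref{pro:aux-problem} and then to re-express the resulting bounds in terms of $\delta$ by means of the defining equation \eqref{eq:choice-alpha-xa} together with the two-sided nonlinearity condition \eqref{eq:twosides}. I would establish the three assertions in the order dictated by their dependencies, beginning with the weak-norm bound \eqref{it_-a-bound}, on which the $X$-norm estimate rests. For \eqref{it_-a-bound} I apply the left inequality of \eqref{eq:twosides} to $x=\xaux$ and insert \eqref{eq:choice-alpha-xa}:
$$
c_a\norm{\xaux - \xp}{-a} \le \norm{F(\xaux) - F(\xp)}{Y} = (C-1)\delta,
$$
which yields $\norm{\xaux - \xp}{-a}\le \frac{C-1}{c_a}\delta$ immediately; no use of the solution smoothness is needed here.

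For the $X$-norm estimate I would interpolate. Since $\xaux - \xp = (\xaux - \bar x)-(\xp - \bar x)$ lies in $X_p$, the interpolation inequality \eqref{eq:interpol} with $r=0$, $s=p$ gives
$$
\norm{\xaux - \xp}{X}\le \norm{\xaux - \xp}{p}^{a/(a+p)}\,\norm{\xaux - \xp}{-a}^{p/(a+p)}.
$$
The first factor is bounded by $E$ thanks to the uniform estimate \eqref{eq:xa-p-norm} (valid for every $\alpha$, hence at $\alpha=\aux$), and the second by the bound \eqref{it_-a-bound} just proved. Because $C-1<C$ and the exponent $p/(a+p)$ is positive, the product is dominated by $E^{a/(a+p)}\lr{\frac{C}{c_a}\delta}^{p/(a+p)}$, which is the first assertion.

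The penalty bound \eqref{it:B-xaux-barx} is the step I expect to be the main obstacle, as it forces a conversion between the $\alpha$-scale and the $\delta$-scale precisely in the direction that makes the penalty blow up. The starting point is \eqref{it:B-xa}, i.e.\ $\norm{B(\xaux - \bar x)}{X}\le E\,\aux^{(p-1)/(2(a+1))}$; since $0<p<1$ the exponent is negative, so I need a \emph{lower} bound on $\aux$. I would obtain it by combining the \emph{right} inequality of \eqref{eq:twosides}, the discrepancy identity \eqref{eq:choice-alpha-xa}, and the weak-norm estimate \eqref{it:xa-xp-a}:
$$
(C-1)\delta = \norm{F(\xaux)-F(\xp)}{Y}\le C_a\norm{\xaux - \xp}{-a}\le C_a E\,\aux^{(a+p)/(2(a+1))}.
$$
Solving for $\aux$ gives $\aux \ge \lr{\frac{(C-1)\delta}{C_a E}}^{2(a+1)/(a+p)}$, and substituting this lower bound into \eqref{it:B-xa}—noting that a negative power reverses the inequality and that the two exponents multiply to $(p-1)/(a+p)$—produces the asserted blow-up of the penalty of order $\delta^{(p-1)/(a+p)}$, with the multiplicative constant in \eqref{it:B-xaux-barx} read off from the above lower bound on $\aux$. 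The only delicate bookkeeping is keeping track of the orientation of the inequalities under the negative exponent; this estimate also renders quantitative, for the auxiliary family, the blow-up anticipated in Remark~\ref{rem:penalty}.
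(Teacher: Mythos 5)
Your proposal is correct and follows essentially the same route as the paper's own proof: the weak-norm bound \eqref{it_-a-bound} via the left inequality of \eqref{eq:twosides} combined with \eqref{eq:choice-alpha-xa}, the $X$-norm bound by interpolation with $r=0$, $s=p$ using \eqref{eq:xa-p-norm} and $C-1<C$, and the penalty bound by deriving the lower bound \eqref{eq:aux-lb} on $\aux$ from the right inequality of \eqref{eq:twosides} together with \eqref{it:xa-xp-a}, then inserting it into \eqref{it:B-xa} with the inequality reversal caused by the negative exponent $(p-1)/(2(a+1))$. One remark on your last step: the constant you would read off is $E\lr{\frac{C-1}{C_{a}E}\,\delta}^{(p-1)/(a+p)}$, which matches the final display in the paper's proof, whereas the statement \eqref{it:B-xaux-barx} shows the inverted ratio $\frac{C_{a}E}{C-1}$ --- a constant-level inconsistency internal to the paper (the $\delta$-order is unaffected), not a flaw in your argument.
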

\begin{proof}
  The first assertion is an easy application of interpolation, similar to
  Proposition~\ref{pro:interpol-tilde}, using the bound from
  Lemma~\ref{lem:-a-norm} with $C$ instead of $C-1$, and the
  bound~(\ref{eq:xa-p-norm}) from Proposition~\ref{pro:aux-problem}.

For the second assertion we exploit the fact that, at the parameter~$\aux(\delta)$ and
by favour of the estimate~(\ref{it:xa-xp-a}), we have
$$
(C-1) \delta = \norm{F(\xaux) - F(\xp)}{Y} \leq C_{a} \norm{\xaux -
  \xp}{-a} \leq C_{a}E \aux^{\frac{a+p}{2(a+1)}}.
$$
This provides us with the lower bound
\begin{equation}
  \label{eq:aux-lb}
\aux(\delta) \geq \lr{\frac{C-1}{C_{a}E}\delta}^{\frac{2(a+1)}{a+p}}.
\end{equation}
Plugging this bound into the estimate~(\ref{it:B-xa}) we find that
$$
\norm{B(\xaux(\delta) - \bar x)}{X} \leq E \lr{\frac{C-1}{C_{a}E}\delta}^{\frac{p-1}{a+p}},
$$
where we used that~$0< p <1$. For the last assertion we use Definition
of~$\xaux$ and the (non-linearity) Assumption~\ref{ass:nonlinearity} to derive
$$
c_{a} \norm{\xaux - \xp}{-a} \leq \norm{F(\xaux) - F(\xp)}{Y} = (C-1)\delta,
$$
from which the proof can easily be completed.
\end{proof}
\begin{remark}
  It will be seen from Lemma~\ref{lem:bounding}, below, that a similar
  upper bound can be shown for the
  growth of the penalty~$\norm{B(\xad - \bar x)}{X}$.
\end{remark}

\subsection{Bounding the term $\norm{B^p(\xad - \bar x)}{X}$}
\label{sec:bounding-Bpxad}

Our goal is to apply Proposition~\ref{pro:interpol-tilde}, and we thus
aim at proving that
$\norm{B^{p}(\xad- \bar x)}{X}\leq \tilde E$.
Since we already know
that the auxiliary element~$\xaux$ obeys~$\norm{\xaux -
  \bar x}{p}\leq E$, it  is sufficient to obtain~$\norm{\xaux - \xad}{p}\leq
\hat E$ such that $\tilde E=E+\hat E$.

\begin{lemma}\label{lem:bounding}
 Let $C>1$ be the constant used in \eqref{eq:dp1} as well as in \eqref{eq:choice-alpha-xa}  and let parameters $\alpha=\adp$
 be chosen according to the discrepancy principle.
Then there is a constant~$\hat E$ depending on $a,p,C,c_a,C_a$ and $E$, but
 not on  the noise level~$\delta$ for which
$$
\norm{B^{p}(\xaux - \xad)}{X}\leq \hat E.
$$
\end{lemma}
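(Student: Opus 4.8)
The plan is to interpolate. We have good control of the difference $\xaux - \xad$ in the weak norm $\norm{\cdot}{-a}$ at the rate $\delta$, and we expect to control it in the strong norm $\norm{\cdot}{1}$ (the penalty level) at the rate $\delta^{(p-1)/(a+p)}$. Since $-a < p < 1$, the interpolation inequality~\eqref{eq:interpol} with $r = p$ and $s = 1$ then bounds $\norm{\xaux - \xad}{p}$ by a product of these two quantities, and the two resulting powers of $\delta$ cancel exactly, leaving a $\delta$-independent constant $\hat E$. So the whole argument reduces to securing the two end-point estimates.

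First I would bound the penalty of the Tikhonov minimizer itself. The key observation is that the discrepancy principle~\eqref{eq:dp1} forces $\norm{F(\xad) - \yd}{Y} = C\delta$, while the matched construction~\eqref{eq:choice-alpha-xa} of the auxiliary element together with the noise model~\eqref{eq:noise} gives $\norm{F(\xaux) - \yd}{Y} \le \norm{F(\xaux) - F(\xp)}{Y} + \norm{F(\xp) - \yd}{Y} \le (C-1)\delta + \delta = C\delta$. Since $\xaux \in \domain$ is an admissible competitor (it lies in $\domain(F)$ by construction and in $\domain(B)$ because it has finite penalty), the minimizing property of $\xad$ at the parameter $\adp$ yields $\norm{F(\xad) - \yd}{Y}^2 + \adp \norm{B(\xad - \bar x)}{X}^2 \le \norm{F(\xaux) - \yd}{Y}^2 + \adp \norm{B(\xaux - \bar x)}{X}^2$. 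The two misfit terms are both bounded by $C^2\delta^2$, so after cancellation and division by $\adp$ I obtain $\norm{B(\xad - \bar x)}{X} \le \norm{B(\xaux - \bar x)}{X}$, which is controlled by~\eqref{it:B-xaux-barx}.

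With this in hand the remainder is routine. By the triangle inequality $\norm{B(\xaux - \xad)}{X} \le \norm{B(\xaux - \bar x)}{X} + \norm{B(\xad - \bar x)}{X} \le 2\norm{B(\xaux - \bar x)}{X} = \bigo(\delta^{(p-1)/(a+p)})$, which is the strong-norm end-point. Likewise, combining Lemma~\ref{lem:-a-norm} with the estimate~\eqref{it_-a-bound} from Proposition~\ref{pro:aux-element} via the triangle inequality gives $\norm{\xaux - \xad}{-a} \le \frac{2C}{c_{a}}\delta = \bigo(\delta)$, the weak-norm end-point. Feeding these two bounds into~\eqref{eq:interpol} at $r = p$, $s = 1$, I would verify that the exponent of $\delta$ is $\frac{p-1}{a+p}\cdot\frac{p+a}{1+a} + 1\cdot\frac{1-p}{1+a} = 0$, so the $\delta$-dependence vanishes and the constant $\hat E$ depends only on $a, p, C, c_{a}, C_{a}$ and $E$.

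The main obstacle is the penalty bound in the second paragraph, and the difficulty there is conceptual rather than computational: one cannot differentiate the non-linear functional or invoke an explicit minimizer formula (as in the linear theory of~\cite{Natterer84}), so one must supply a competitor whose misfit matches the level enforced by the discrepancy principle. The auxiliary element $\xaux$ is engineered precisely so that its misfit is at most $C\delta$, which lets the misfit terms cancel and reduces the comparison to one between penalties alone. This is the single place where the two coupled discrepancy principles~\eqref{eq:dp1} and~\eqref{eq:choice-alpha-xa} are used in tandem, and it is what replaces the explicit-representation arguments available only in the linear case.
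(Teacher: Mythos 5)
Your proposal is correct and follows essentially the same route as the paper's own proof: the competitor argument with $\xaux$ in the Tikhonov functional (using the matched discrepancy levels $C\delta$ and $(C-1)\delta+\delta$ to cancel the misfit terms and obtain $\norm{B(\xad-\bar x)}{X}\leq\norm{B(\xaux-\bar x)}{X}$), the triangle inequality, and interpolation between the $-a$ and $1$ norms with the exponents of $\delta$ cancelling exactly. The only cosmetic point is your phrase that both misfit terms are ``bounded by $C^{2}\delta^{2}$''; the cancellation actually requires that the misfit of $\xad$ \emph{equals} $C\delta$, which you had already secured from \eqref{eq:dp1}, so the argument stands as written.
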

\begin{proof}
  The proof will be done in two steps.

  Inserting~$\xaux$ into the Tikhonov functional~(\ref{eq:tikhonov}) we find, by using that
  both~$\alpha$ were chosen according to the discrepancy
  principle and $\xaux$ was chosen as
  in~(\ref{eq:choice-alpha-xa}), that
  \begin{align*}
\lr{C \delta}^{2} +  \alpha \norm{B(\xad - \bar x)}{X}^{2} & \leq
\norm{F(\xad) - \yd}{Y}^{2} + \alpha \norm{B(\xad - \bar x)}{X}^{2}\\
& \leq \norm{F(\xaux) - \yd}{Y}^{2} + \alpha \norm{B(\xaux - \bar
                                                                     x)}{X}^{2} \\
& \leq \lr{(C-1) \delta + \delta}^{2}    + \alpha \norm{B(\xaux - \bar x)}{X}^{2}\\
& = \lr{C \delta}^{2} +  \alpha \norm{B(\xaux - \bar  x)}{X}^{2},
  \end{align*}
  such that~$\norm{B(\xad - \bar x)}{X}\leq \norm{B(\xaux - \bar
    x)}{X}$.
Using the triangle inequality we find
\begin{equation} \label{eq:addnew}
\norm{B(\xaux - \xad)}{X} \leq \norm{B(\xaux - \bar x)}{X}
+\norm{B (\xad-\bar x)}{X} \leq 2  \norm{B(\xaux - \bar
  x)}{X}.
\end{equation}
Now we use the interpolation
  inequality~(\ref{eq:interpol}). Since~$-a <  p < 1$ we have the estimate
\begin{equation}
  \label{eq:new}
\norm{\xaux - \xad}{p}\leq \norm{\xaux -
  \xad}{1}^{\frac{a+p}{a+1}}\norm{\xaux - \xad}{-a}^{\frac{1-p}{a+1}}.
\end{equation}
For the first factor in~(\ref{eq:new}) we apply inequality (\ref{eq:addnew})
and the corresponding
bound~(\ref{it:B-xaux-barx}) from
Proposition~\ref{pro:aux-element}. From the bound~(\ref{it_-a-bound}) in
Proposition~\ref{pro:aux-element} and Lemma~\ref{lem:-a-norm} we find
that
$$
\norm{\xaux - \xad}{-a} \leq \norm{\xaux - \xp}{-a} + \norm{\xad -
  \xp}{-a} \leq
\left(\frac{2C}{c_{a}}\right)\delta,
$$
which allows bounding the second factor in~(\ref{eq:new}).
Overall this yields
$$
\norm{\xaux - \xad}{p}\leq \lr{2E \lr{\frac{C_{a}
  E}{C-1}\delta}^{\frac{p-1}{a+p}}}^{\frac{a+p}{a+1}}
\lr{\left(\frac{2C}{c_{a}}\right)\delta}^{\frac{1-p}{a+1}} =: \hat E,
$$
 where the constant~$\hat E$ depends on $a,p,C,c_a,C_a$ and $E$, but
 not on  the noise level~$\delta$.
Since we have from the estimate (\ref{eq:xa-p-norm})  in
Proposition~\ref{pro:aux-element} (which holds for
all~$\alpha>0$, and hence also for~$\aux$) that
$\norm{\xaux - \bar x}{p}\leq E$, we conclude
that~$\norm{B^{p}(\xad - \bar x)}{}\leq \hat E + E$.
\end{proof}

 This allows us to
apply Proposition~\ref{pro:interpol-tilde}, and it yields
the order optimality of the reconstruction~$\xad$ as stated in the theorem.

\section{Discussion}
\label{sec:discussion}

First, we mention that in the limiting case~$p=1$ the order optimal
convergence rate as established in the theorem is well-known also for the discrepancy principle
(cf.~Theorem~3.1 in~\cite{Taute94}). In this situation, where the penalty is not oversmoothing,
for the proof of the rate result only the left inequality of \eqref{eq:twosides} is required, and
no auxiliary elements are needed. For other regularization
  methods, which do not use a penalty, the rates as established in the
  theorem are known. This is the case for Landweber iteration, see
  e.g.~\cite{MR1754723}, and for Newton-type methods, see~\cite{MR1741236}.

The theorem also applies for linear operators, and in
this case it extends the original result
from~\cite{Natterer84} to the a posteriori parameter choice by the
discrepancy principle.

We add the following observation.
The lower bound from~(\ref{eq:aux-lb})  was
    derived for the parameter choice according to the discrepancy
    principle. We notice that this bound coincides up to different constants with the
  a priori parameter choice $\alpha(\delta)=\alpha_{apriori}(\delta)$
  established in~\cite{Natterer84}  and obeys the limit condition
  \begin{equation}\label{eq:infi}
  \frac{\delta^2}{\alpha(\delta)} \to \infty \quad \mbox{as} \quad \delta \to 0.
  \end{equation}

In the conventional case, i.e.,\  when $\xp \in
\domain(B)$,
we always have (also for the discrepancy principe, cf.~\cite[Proposition~8]{AnzHofMat14})
that $$\frac{\delta^2}{\alpha(\delta)} \to 0 \quad \mbox{as} \quad
\delta \to 0.$$
Together with the fact that~$\alpha(\delta) \to 0$
this provides the basis of the convergence theory for regularized
solutions when $\xp \in \domain(B)$
(cf.~\cite[Section~4.1.2]{SKHK12}).
Within regularization with oversmoothing penalty, i.e., when~$\xp
\notin \domain(B)$,  this cannot be
expected and even the limit condition (\ref{eq:infi}) seems to be possible in general.

We also highlight that the optimal rate as described here shows a
delay of the saturation phenomenon, known for Tikhonov
regularization. To exhibit this let us stick to the linear case,
and let us take~$B:=\lr{A^{\ast}A}^{-1}$, hence we have that~$\xp \in\domain(B)$ exactly
if~$\xp\in\mathcal{R}(A^*A)$. The link condition from
Assumption~\ref{ass:nonlinearity} then holds for~$a=1/2$, and the
maximal rate (for $p=1$, i.e., no oversmoothing), as established in
the present study is~$\delta^{1/(1 +   1/2)} = \delta^{2/3}$, even
under the discrepancy principle. Without the smoothness promoting
penalty, i.e.,\ when the penalty is~$\norm{x - \bar x}{X}^{2}$, then,
under the discrepancy principle, the best possible rate
is~$\delta^{1/2}$, which is attained for maximal smoothness~$p=1/2$,
i.e.,\ when~$\xp \in \mathcal{R}(A^{\ast})$.

\section*{Acknowledgment}
The research was financially supported by Deutsche
Forschungsgemeinschaft (DFG-grant HO 1454/10-1).

\appendix
\section*{Appendix} \label{sec:appendix}

We collect assertions and examples which highlight that the
non-linearity condition~\eqref{eq:twosides} of
Assumption~\ref{ass:nonlinearity} is valid in specific cases.  For a
more detailed discussion of the interplay between the occurring norms
$\norm{F(x)-F(\xp)-F^\prime(\xp)(x -\xp)}{Y},$ $\norm{F(x) - F(\xp)}{Y}$ and also \linebreak $\norm{F^\prime(\xp)(x - \xp)}{Y}$ in the case of
ill-posed problems we refer to \cite{HofSch94}.

We start from the following result, which is easily verified, and we
omit the proof.
\begin{pro}
  \label{pro:appendix}
  Suppose that the operator $F$ is continuously Fr\'echet
  differentiable in a ball $\mathcal{B}_r(\xp) \subset \domain(F)$
  with radius $r>0$ and let denote the Fr\'echet derivatives by
  $F^\prime(x),\;x \in \mathcal{B}_r(\xp)$. If there are
  constants~$0 < g_{a} \leq G_{a}<\infty$
  and~$0 < k_{a}< K_{a}<\infty$ such that
  \begin{align}
    g_{a}\|B^{-a}h\|_X & \leq
                         \|F^\prime(\xp)h\|_Y \leq G_{a}\|B^{-a}h\|_X\quad \mbox{for all} \;\; h\in X,\label{eq:Gateaux}\intertext{and}
                         k_{a}\norm{F^\prime(\xp)(x -
                         \xp)}{Y} &\leq \norm{F(x) - F(\xp)}{Y} \leq
                                    K_{a}\norm{F^\prime(\xp)(x - \xp)}{Y},\label{eq:Flowup}
  \end{align}
  for all $x\in \mathcal{B}_r(\xp)$, then
  Assumption~\ref{ass:nonlinearity} holds with
  constants~$c_{a}= g_{a} k_{a}$ and~$C_{a}= G_{a}K_{a}$.
\end{pro}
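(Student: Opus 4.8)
The plan is to obtain both inequalities of Assumption~\ref{ass:nonlinearity} by chaining the two prescribed two-sided estimates, evaluated at the increment $h = x-\xp$. Because each hypothesis is itself two-sided, the argument splits symmetrically into an upper and a lower estimate, and the constants compose multiplicatively, which is exactly why the advertised values $c_a = g_a k_a$ and $C_a = G_a K_a$ appear.

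First I would specialize the linearized link condition~\eqref{eq:Gateaux} to $h = x-\xp$, turning the intermediate quantity $\norm{F^\prime(\xp)(x-\xp)}{Y}$ into something comparable to $\norm{B^{-a}(x-\xp)}{X} = \norm{x-\xp}{-a}$ (note that $\norm{x-\xp}{-a}$ is well defined for every $x\in X$, since $B^{-a}$ is bounded for $a>0$). Then I would insert this into the nonlinearity comparison~\eqref{eq:Flowup}. For the lower bound this yields $\norm{F(x)-F(\xp)}{Y} \geq k_a \norm{F^\prime(\xp)(x-\xp)}{Y} \geq k_a g_a \norm{x-\xp}{-a}$, and for the upper bound, symmetrically, $\norm{F(x)-F(\xp)}{Y} \leq K_a \norm{F^\prime(\xp)(x-\xp)}{Y} \leq K_a G_a \norm{x-\xp}{-a}$. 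This identifies the constants, and the ordering $0 < c_a < C_a < \infty$ follows directly from $0<g_a\leq G_a$ and $0<k_a<K_a$, since $c_a = g_a k_a \leq G_a k_a < G_a K_a = C_a$.

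There is essentially no analytic obstacle here; the verification is pure chaining, and no regularity of $F$ beyond the stated continuous Fr\'echet differentiability is consumed. The only point that deserves a word is the set on which the conclusion is claimed. The hypotheses~\eqref{eq:Gateaux}--\eqref{eq:Flowup} are posed on the ball $\mathcal{B}_r(\xp)$ (with~\eqref{eq:Gateaux} in fact holding on all of $X$), whereas Assumption~\ref{ass:nonlinearity} is phrased for all $x\in\domain$. Since $\mathcal{B}_r(\xp)\subset\domain(F)$ by hypothesis, the chaining delivers the link condition for every $x\in\mathcal{B}_r(\xp)$, i.e.\ locally around the interior point $\xp$. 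As the entire convergence analysis is local near $\xp$ (cf.~Assumption~\ref{ass:basic} and Remark~\ref{rem:xa-near}), this local form is precisely what the proof of the main theorem uses, so I would read the conclusion as the link condition valid on $\mathcal{B}_r(\xp)$, which confirms that the assertion is indeed routine.
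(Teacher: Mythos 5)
Your proof is correct and is exactly the routine chaining argument the paper alludes to when it states the result ``is easily verified'' and omits the proof: specialize~\eqref{eq:Gateaux} to $h=x-\xp$, compose with~\eqref{eq:Flowup}, and multiply the constants to get $c_a=g_ak_a$ and $C_a=G_aK_a$. Your side remark that the hypotheses only deliver~\eqref{eq:twosides} on $\mathcal{B}_r(\xp)$ rather than on all of $\domain$ is a fair reading of a point the paper glosses over, and your resolution (the analysis is local near the interior point $\xp$) is the intended one.
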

\begin{remark}
  The inequality chain \eqref{eq:Gateaux} restricts the class of
  possible operators $B$ for generating the Hilbert scale under
  consideration. The positive constant $a$ can be interpreted as {\it
    degree of ill-posedness} for the linearized problem
  $F^\prime(\xp)\,x=y$.  We mention that the condition
  \eqref{eq:Gateaux} has already been used in \cite[Assumption
  2.1~(iv)]{Neubau92}).  Since $B$ is assumed to be unbounded with
  lower bound $m>0$ of the spectrum $\sigma(B)$, the operator $B^{-a}$
  is bounded and injective with
  $\|B^{-a}\|_{X \to X} \le \frac{1}{m^a}$ and $0 \in \sigma(B^{-a})$.
  Consequently, $F^\prime(\xp)$ in \eqref{eq:Gateaux} must be
  injective with non-closed range and
  $0 \in \sigma((F^\prime(\xp)^*F^\prime(\xp))^{1/2})$.
\end{remark}

We close this section with two corollaries of Proposition~\ref{pro:appendix} and corresponding examples,
where the non-linearity condition from~(\ref{eq:Flowup}) can be proven in
specific situations.
\begin{corollary}
  Suppose that the operator $F$ is continuously Fr\'echet
  differentiable in a ball $\mathcal{B}_r(\xp) \subset \domain(F)$
  with radius $r>0$. If, for a constant $0 \le \eta <1$, the
  non-linearity condition of tangential cone type
  (cf.~\cite[Section~11.1]{EHN96})
  \begin{equation} \label{eq:Fc} \norm{F(x)-F(\xp)-F^\prime(\xp)(x -
      \xp)}{Y} \leq \eta \norm{F(x) - F(\xp)}{Y}
  \end{equation}
  is satisfied for all $x\in \mathcal{B}_r(\xp)$ then
  \eqref{eq:Flowup} holds with $k_a=1/(1+\eta)$ and
  $K_a=1/(1-\eta)$. Hence~(\ref{eq:Gateaux}) and~(\ref{eq:Fc}) imply
  that Assumption~\ref{ass:nonlinearity} is valid.
\end{corollary}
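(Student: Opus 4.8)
The plan is to establish the two-sided inequality~\eqref{eq:Flowup} purely from the tangential cone condition~\eqref{eq:Fc} by elementary triangle inequalities, and then to invoke Proposition~\ref{pro:appendix} with the derived constants. To streamline the bookkeeping I would abbreviate the nonlinear increment by $u := F(x) - F(\xp)$ and its linearization by $v := \fp(x - \xp)$, so that~\eqref{eq:Fc} reads simply $\norm{u - v}{Y} \le \eta \norm{u}{Y}$, uniformly for $x \in \mathcal{B}_r(\xp)$. The goal is then to sandwich $\norm{u}{Y}$ between fixed multiples of $\norm{v}{Y}$.

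For the right-hand inequality of~\eqref{eq:Flowup} I would split $u = v + (u - v)$ and estimate $\norm{u}{Y} \le \norm{v}{Y} + \norm{u - v}{Y} \le \norm{v}{Y} + \eta \norm{u}{Y}$. Because $0 \le \eta < 1$, the factor $1 - \eta$ is strictly positive, so rearranging yields $\norm{u}{Y} \le (1-\eta)^{-1}\norm{v}{Y}$, which is exactly the upper bound with $K_a = 1/(1-\eta)$. For the left-hand inequality I would instead write $v = u - (u - v)$ and bound $\norm{v}{Y} \le \norm{u}{Y} + \norm{u - v}{Y} \le (1 + \eta)\norm{u}{Y}$; dividing by $1 + \eta$ gives $(1+\eta)^{-1}\norm{v}{Y} \le \norm{u}{Y}$, i.e.\ the lower bound with $k_a = 1/(1+\eta)$. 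Both constants are manifestly positive and finite, and neither depends on $x$, so~\eqref{eq:Flowup} holds throughout the ball.

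Finally I would feed~\eqref{eq:Flowup}, together with the assumed norm equivalence~\eqref{eq:Gateaux}, into Proposition~\ref{pro:appendix}; this immediately delivers Assumption~\ref{ass:nonlinearity} with $c_a = g_a k_a$ and $C_a = G_a K_a$. There is essentially no serious obstacle here: the whole argument is a two-line application of the triangle inequality. The only point deserving care is the strict bound $\eta < 1$, which is precisely what makes $1 - \eta$ invertible and hence guarantees a finite $K_a$; had $\eta$ been allowed to equal $1$, the nonlinear increment could degenerate relative to its linearization and the upper equivalence would break down.
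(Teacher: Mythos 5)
Your proof is correct: the two triangle-inequality estimates give exactly $k_a=1/(1+\eta)$ and $K_a=1/(1-\eta)$, and combining with \eqref{eq:Gateaux} via Proposition~\ref{pro:appendix} yields Assumption~\ref{ass:nonlinearity} with $c_a=g_ak_a$, $C_a=G_aK_a$. The paper omits the proof of this corollary as routine, and your argument is precisely the standard one it intends, so there is nothing to add.
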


\begin{xmpl}[Exponential growth model]\label{ex:exp-growth} The
  following example for an  exponential growth model was originally
introduced by Groetsch in
  \cite[Section~3.1]{Groe93}. Here we aim at identifying  the time dependent growth rate
  $x(t)\;(0 \le t \le T)$ from observations of the size
  $y(t)\;(0 \le t \le T)$ of a population with $y(0)=y_0>0$ such that
  the initial value problem
$$ y^\prime(t) = x(t)\,y(t)\quad(0 \le t \le T),\qquad y(0)=y_0\,,$$
is satisfied. For $X=Y=L^2(0,T)$ the forward operator attains the form
\begin{equation*}
[F(x)](t)=y_0\,\exp\left(\int_0^t x(\tau)d\tau \right) \quad (0 \le t \le T).
\end{equation*}
We note that the corresponding operator equation $F(x) = y$
is locally ill-posed everywhere in $X$.  The operator $F$ is Fr\'echet
differentiable on the whole Hilbert space $L^2(0,1)$ and possesses the
Fr\'echet derivative
$$[F^\prime (\xp) h](t)=[F(\xp)](t) \int_0^t h(\tau)d\tau  \quad (0 \le t \le T), \quad h \in L^2(0,T), $$
continuously mapping in $L^2(0,T)$. One easily verifies that
\begin{equation} \label{eq:nldegree}
  \|F(x)-F(\xp)-F^\prime(\xp)(x-\xp)\|_Y \le
  K\,\|F(x)-F(\xp)\|_Y\|x-\xp\|_X
\end{equation}
holds with some constant $K>0$ for all $x \in X$. Hence, for
$\domain(F)=\overline{\mathcal{B}_r(\xp)}$ (closed ball around $\xp$
with sufficiently small radius $r>0$) the condition \eqref{eq:Fc} is
satisfied with $0<\eta<1$.

Looking at~(\ref{eq:nldegree}) it is reasonable to consider the integration operator
$[Jh](t):=\int_0^t h(\tau)d\tau \;(0 \le t \le T)$ mapping in
$L^2(0,T)$, and the related Hilbert scale $\{X_\tau\}_{\tau \in
  \mathbb{R}}$ generated by the operator $B=(J^*J)^{-1/2}$.
With respect to this scale, and since
$0<\underline c \le [F(\xp)](t) \le \overline c \le \infty\;(0 \le t
\le T)$ for the multiplier function in $F^\prime (\xp)$, we find
that~\eqref{eq:twosides} holds with $a=1$.

The range $\mathcal{R}((J^*J)^{-p/2})$ is well-known as the fractional
Sobolev space $H^p(0,T)$ for $0<p<1/2$. It is  a subspace of
$H^p(0,T)$ for $1/2 \le p \le 1$ (cf.~\cite[Thm.~2.1]{GorLuYam15}). The source condition $\xp \in \domain(B^p)$ for $0<p<1$ in combination with $\xp \notin \domain(B)$ implies
that the function $\xp$ belongs to the Sobolev space $H^p(0,T)$, but not to $H^1(0,T)$.

\end{xmpl}

\begin{corollary}
  Suppose that the operator $F$ is continuously Fr\'echet
  differentiable in a ball $\mathcal{B}_r(\xp) \subset \domain(F)$
  with radius $r>0$. If there is a family~$M(x):X \to X$ of bounded
  linear operators which obey $\|M(x)-I\|_{X \to X} \le c<1$ such
  that
  \begin{equation} \label{eq:Kalten} F^\prime(x)= M(x)\,F^\prime(\xp)
    \quad \mbox{for all} \quad x \in \mathcal{B}_r(\xp),
  \end{equation}
  then the estimates from~(\ref{eq:Flowup}) hold with
  constants~$k_a=1-c$ and $K_a=1+c$. Thus~(\ref{eq:Gateaux})
  and ~(\ref{eq:Kalten}) imply the validity of
  Assumption~\ref{ass:nonlinearity}.
\end{corollary}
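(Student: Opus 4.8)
The plan is to read off the two-sided estimate~\eqref{eq:Flowup} from the multiplicative perturbation~\eqref{eq:Kalten} by integrating the derivative along the segment joining~$\xp$ and~$x$. Fix~$x\in\mathcal{B}_r(\xp)$; since the ball is convex the whole segment~$\xp + t(x-\xp),\ t\in[0,1]$, lies in~$\mathcal{B}_r(\xp)\subset\domain(F)$, and continuous Fr\'echet differentiability of~$F$ legitimizes the representation
\begin{equation*}
F(x) - F(\xp) = \int_0^1 F'\lr{\xp + t(x-\xp)}(x-\xp)\, dt.
\end{equation*}
Inserting~\eqref{eq:Kalten} and abbreviating the fixed vector~$w := \fp(x-\xp)$, which is independent of the integration variable~$t$, this becomes
\begin{equation*}
F(x) - F(\xp) = \int_0^1 M\lr{\xp + t(x-\xp)}\,w\, dt.
\end{equation*}

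Next I would compare this expression with~$w$ itself. Since the image vector~$w$ is common to all integrands, the hypothesis~$\norm{M(\cdot)-I}{Y\to Y}\le c$ applies pointwise to~$w$, and the triangle inequality for the~$Y$-valued integral gives
\begin{equation*}
\norm{\lr{F(x)-F(\xp)} - w}{Y} \leq \int_0^1 \norm{\lr{M\lr{\xp+t(x-\xp)}-I}w}{Y}\, dt \leq c\,\norm{w}{Y}.
\end{equation*}
Applying the triangle inequality once from above and once from below then yields~$(1-c)\norm{w}{Y}\le \norm{F(x)-F(\xp)}{Y}\le (1+c)\norm{w}{Y}$, which is precisely~\eqref{eq:Flowup} with~$k_a = 1-c$ and~$K_a = 1+c$ (here~$c<1$ guarantees that the lower constant stays positive). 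Feeding these constants together with the linearized link condition~\eqref{eq:Gateaux} into Proposition~\ref{pro:appendix} establishes Assumption~\ref{ass:nonlinearity}, with~$c_a = g_a(1-c)$ and~$C_a = G_a(1+c)$.

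I expect no serious obstacle, as the argument is essentially routine. The only points deserving care are the justification of the integral mean-value representation of~$F(x)-F(\xp)$, which relies on continuous differentiability of~$F$ and on convexity of the ball so that the segment never leaves the domain, and the structural observation that makes~\eqref{eq:Kalten} effective: although the perturbation operators~$M$ are evaluated at the moving point~$\xp+t(x-\xp)$, they all act on the \emph{same} fixed element~$w=\fp(x-\xp)$, so the entire~$t$-dependence is absorbed into an average that stays within distance~$c<1$ of the identity and hence can neither inflate nor annihilate the norm of~$w$.
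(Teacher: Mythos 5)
Your proposal is correct and follows essentially the same route as the paper: the mean value theorem in integral form, insertion of the factorization~\eqref{eq:Kalten} to bound $\|F(x)-F(\xp)-F^\prime(\xp)(x-\xp)\|_Y$ by $c\,\|F^\prime(\xp)(x-\xp)\|_Y$, and the triangle inequality to obtain the two-sided estimate with $k_a=1-c$, $K_a=1+c$, concluding via Proposition~\ref{pro:appendix}. The only cosmetic difference is that you spell out the convexity of the ball and the fixed vector $w=F^\prime(\xp)(x-\xp)$ explicitly (and, correctly, read the bound on $M(x)-I$ in the operator norm on the image space), which the paper leaves implicit.
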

\begin{proof}
By mean value theorem in integral form we have for $x \in \mathcal{B}_r(\xp)$
$$\|F(x)-F(\xp)-F^\prime(\xp)(x-\xp)\|_Y =\|\int_0^1 [F^\prime(\xp+t(x-\xp))-F^\prime(\xp)](x-\xp)dt\|_Y $$
$$\le\|\int_0^1 [M(\xp+t(x-\xp))-I]\,F^\prime(\xp)(x-\xp)\,dt\|_Y \le c \,\|F^\prime(\xp)(x-\xp)\|_Y$$
and hence by triangle inequality
$$ (1-c)\,\|F^\prime(\xp)(x-\xp)\|_Y \le \|F(x)-F(\xp)\|_Y \le (1+c)\,\|F^\prime(\xp)(x-\xp)\|_Y\,.$$
\end{proof}

The condition (\ref{eq:Kalten}) and variants of requirements on $M(x)$ have been discussed in \cite{HNS95} (see also in \cite{Kalten08}), and we
recall Example 4.1 from  \cite{HNS95}:

\begin{xmpl}[Non-linear Hammerstein integral operator]
We consider %\linebreak
the spaces $X:=H^1[0,1]$ and $Y:=L^2(0,1)$ and in this context the non-linear integral operator $F:X \to Y$, which is defined as
\begin{equation*}
[F(x)](t)=\int \limits_0^t \phi(x(s))ds\,, \quad 0 \le t \le 1\,.
\end{equation*}
Then, for a sufficiently smooth real function $\phi$ and for a sufficiently small radius $r>0$, the condition (\ref{eq:Kalten}) is satisfied with
\linebreak $\|M(x)-I\|_{X \to X} \le c<1$ for arbitrary $\xp \in X$
whenever the derivative $\phi^\prime$ of the kernel function $\phi$ is uniformly bounded below by a positive constants for all arguments under consideration.
\end{xmpl}

\bibliographystyle{plain} \bibliography{revise_2nd_oversmoothing}
\end{document}